\newcommand*\diff{\mathop{}\!\mathrm{d}}
\newtheorem{Theorem}{Theorem}
\newtheorem*{remark}{Remark}
\begin{document}

\date{}
\author{Arseny Raiko}
\title{Comparison of the first positive Neumann eigenvalues for rectangles and special parallelograms}
\maketitle{}

\begin{abstract}
First non-zero Neumann eigenvalues of a rectangle and a parallelogram with the same base and area are compared in case when the height of the parallelogram 
is greater than the base. This result is applied to compare first non-zero Neumann eigenvalue normalized by the square of the perimeter on the parallelograms
with a geometrical restriction and the square. The result is inspired by Wallace--Bolyai--Gerwien theorem. An interesting three-dimensional problem related
to this theorem is proposed.
\end{abstract}

Let $\Omega$ be a planar simply connected bounded domain. Suppose $\partial \Omega$ satisfies all the necessary regularity assumptions
 needed to gurantee that the Neumann spectrum of $\Omega$ is discrete. Denote by $\mu_n$ Neumann eigenvalues of the Laplacian on the domain,
by $p(\Omega)$ the boundary volume of the domain (i. e. the perimeter) and by $d(\Omega)$ the diameter of $\Omega$. 
The first Neumann eigenvalue $\mu_1$ is always $0$, because constant functions satisfy Neumann boundary conditions.
So the first non-zero Neumann eigenvalue is $\mu_2$.
Consider the variational problem $$\sup \{ \mu_2(\Omega): \Omega ~\text{ is open bounded domain with} ~p(\Omega) = c \}.$$
As it is explained in \cite{Bucur}, this maximization problem is ill-posed. The supremum is unbounded on the sequence of squares with a side replaced by a sinusoidal arc.
Remark that this is a sequence of non-convex domains. However, this problem is well posed if we restrict our consideration to convex domains $\Omega$.
In the paper \cite{Kroger} of Kr\"oger it is shown that $\mu_2(\Omega)d^2(\Omega)$ is bounded above in the convex case.
And also it is well known that in convex case $\dfrac{p(\Omega)}{d(\Omega)} \leq \pi$ (for instance, see \cite{Cifre}).
 So, $\mu_2(\Omega)$ is bounded above for the convex domains with the fixed perimeter.
One can replace the variational problem by the problem
of finding the supremum of the functional $M_2(\Omega) = \mu_2(\Omega) p(\Omega)^2$ on planar domains. This is so-called normalized eigenvalue.
Multiplication by the square of the perimeter is necessary to make the functional homothety invariant. There is a long-term stated conjecture that
the maximal value of this functional on convex domains is attained both on the equilateral triangle ($E$) and the square ($S$).
It's well-known that $M_2(E) = M_2(S) = 16 \pi^2$ (Remark that the second Neumann eigenvalue has multiplicity 2 on both domains).
This problem could be interpreted as an analogue of Szeg\H{o}'s inequality with another geometrical normalization,
the perimeter of the border instead of the volume of the domain.
Original Szeg\H{o}'s inequality states that $\mu_2(\Omega) Area(\Omega) \leq \mu_2(D) Area(D)$, where $D$ is a disk
 and the equality obtained only when $\Omega$ is a disk \cite{Szego}. 
Normalization by the perimeter makes the problem more complicated, than the normalization by the area.
So it becomes interesting to show that $M_2(\Omega) \leq M_2(S)$ holds at least in the polygonal class of domains $\Omega$. 
Laugesen and Siudeja have shown in \cite{Siudeja} that $M_2(\Omega) \leq M_2(E)$ holds in the class of all triangles.
Then one can try to show that $M_2(\Omega) \leq M_2(E)$ holds in the class of convex quadraliterals. 
 This paper provides this inequality in a class of parallelograms with a geometrical restriction.

\begin{Theorem}{}
Consider a parallelogram $Q$ with the lenghts of the sides $a \geq b$ and the smallest angle~$\alpha$.
Let $R$ be a rectangle with the same volume and the same shorter side as $Q$. 
If $\dfrac{b}{a} \leq sin \alpha$, then

\begin{equation}
\label{QlR}
\mu_2(Q) \le \mu_2(R).
\end{equation}

\end{Theorem}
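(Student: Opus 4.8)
\noindent
The plan is to bound $\mu_2(Q)$ from above through the variational characterization of the first nonzero Neumann eigenvalue,
\[
\mu_2(Q) = \min\left\{ \frac{\int_Q |\nabla u|^2}{\int_Q u^2} \ :\ u \in H^1(Q),\ \int_Q u = 0,\ u \not\equiv 0 \right\},
\]
by exhibiting a single explicit test function whose Rayleigh quotient equals $\mu_2(R)$. First I would fix coordinates adapted to the geometry: place the base of length $b$ along the $x$-axis and realize $Q$ as a horizontal shear of a rectangle, so that $Q = \{(X,Y): 0 \le Y \le H,\ Y\cot\alpha \le X \le b + Y\cot\alpha\}$, where $H = a\sin\alpha$ is the height of $Q$ (equivalently, the longer side of $R$). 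The decisive structural feature is that every horizontal slice $\{Y = \mathrm{const}\}$ of $Q$ has width exactly $b$, so that $\int_Q f(Y)\,dX\,dY = b\int_0^H f(Y)\,dY$ for any function depending only on the vertical coordinate.

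Next I would compute $\mu_2(R)$ explicitly. Writing $R = [0,b]\times[0,H]$ and separating variables, the Neumann eigenvalues are $\pi^2(m^2/b^2 + n^2/H^2)$ with $m,n \ge 0$. The hypothesis $b/a \le \sin\alpha$ is precisely the condition $H = a\sin\alpha \ge b$, which forces the lowest nonzero eigenvalue to be the purely vertical mode: $\mu_2(R) = \pi^2/H^2$, with eigenfunction $\cos(\pi Y/H)$ depending only on $Y$.

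The core step is to transplant this vertical mode to $Q$. I would take $u(X,Y) = \cos(\pi Y/H)$ as a test function on $Q$. Since each horizontal slice of $Q$ has width $b$, the constant-width computation gives $\int_Q u = b\int_0^H \cos(\pi Y/H)\,dY = 0$, so $u$ is admissible, and (using $\int_0^H \sin^2 = \int_0^H \cos^2 = H/2$)
\[
\frac{\int_Q |\nabla u|^2}{\int_Q u^2} = \frac{b\,(\pi^2/H^2)\int_0^H \sin^2(\pi Y/H)\,dY}{b\int_0^H \cos^2(\pi Y/H)\,dY} = \frac{\pi^2}{H^2} = \mu_2(R).
\]
The variational principle then yields $\mu_2(Q) \le \pi^2/H^2 = \mu_2(R)$, which is exactly \eqref{QlR}.

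I expect the only genuinely delicate point to be the \emph{role of the hypothesis}: it is needed not for the test-function computation on $Q$ (which always returns $\pi^2/H^2$, since $u$ has a purely vertical gradient and $Q$ behaves like a vertical cylinder of slice width $b$), but to guarantee that $\pi^2/H^2$ is the first nonzero eigenvalue of $R$ rather than the competing horizontal mode $\pi^2/b^2$. Thus the entire weight of $b/a \le \sin\alpha$ sits in correctly identifying $\mu_2(R)$; once $H \ge b$ is secured, the shear-invariance of the slice width makes the transplanted vertical mode reproduce the rectangle's Rayleigh quotient verbatim and the comparison follows. One should also note that the bound is typically strict, since the transplanted function is in general not the true second eigenfunction of $Q$.
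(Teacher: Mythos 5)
Your proof is correct, and it arrives at the same test function as the paper --- the purely vertical mode $\cos(\pi Y/H)$ of the rectangle, transplanted unchanged onto the parallelogram --- but it verifies the Rayleigh quotient by a genuinely different and much shorter route. The paper realizes $Q$ as a scissors-congruent rearrangement of $R$: it cuts the domain into finitely many pieces (a single cut in the easy case $a\cos\alpha\le b$, and an iterated construction of perpendiculars and horizontal lines in general), translates each piece horizontally, and then argues that because the eigenfunction depends only on $y$ and horizontal translations preserve $y$, the piecewise-defined transplant is $C^1$ and the numerator and denominator of the Rayleigh quotient match term by term. You bypass the decomposition entirely with the single observation that every horizontal slice of the sheared domain has width $b$, so Fubini reduces the mean-zero condition and both integrals to one-dimensional integrals identical to those over $R$. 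This buys brevity and eliminates both the case distinction and the finite-iteration argument; what it forgoes is the explicit Wallace--Bolyai--Gerwien-style dissection that is the paper's thematic point and motivates its closing discussion of Dehn invariants. Your remark that the hypothesis $b/a\le\sin\alpha$ is needed only to identify $\pi^2/H^2$ as $\mu_2(R)$, while the transplantation bound $\mu_2(Q)\le\pi^2/H^2$ holds unconditionally, is accurate and sharper than what the paper states; your computation also implicitly corrects the paper's slip $\mu_2=\pi^2/a^2$, which should read $\pi^2/(a\sin\alpha)^2$, as the paper itself uses in its second theorem.
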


\begin{proof}

First, let's show that $(\ref{QlR})$  holds in a special case. Consider the case of parallelograms with $a \cos \alpha\leq~b$.
This means that the projection of the longer side onto the line containing the shorter side is not greater than the shorter side.
This restriction is not necessary but in this case the proposed construction is simpler.

Define the coordinate system, associated with the given parallelogram $ABCD$ such that
 $x$ axis runs along the shorter side $AB$, $y$ axis is orthogonal to $x$ and all vertices of parallelogram have non-negative $y$ coordinates
(see \autoref{fig:coorsys}). Let us denote the intersection of the line $DC$ with the $y$ axis by $F$. Choose $E$ on the line $DC$ such that
 $ABEF$ is a rectangle. It is easy to show that $ABEF$ has the same base and volume as $ABCD$.

The condition $\dfrac{b}{a}  \leq \sin \alpha$ guarantees that $|BE| = a \sin \alpha \geq b = |AB|$, i.e the height $BE$ is greater than the side $AB$.
The additional condition stating that the projection of the longer side onto the shorter side is not greater than the shorter side provides
 that the point $E$ lies between $D$ and $C$.

\begin{figure}[h]
\center{\includegraphics[scale = 0.65]{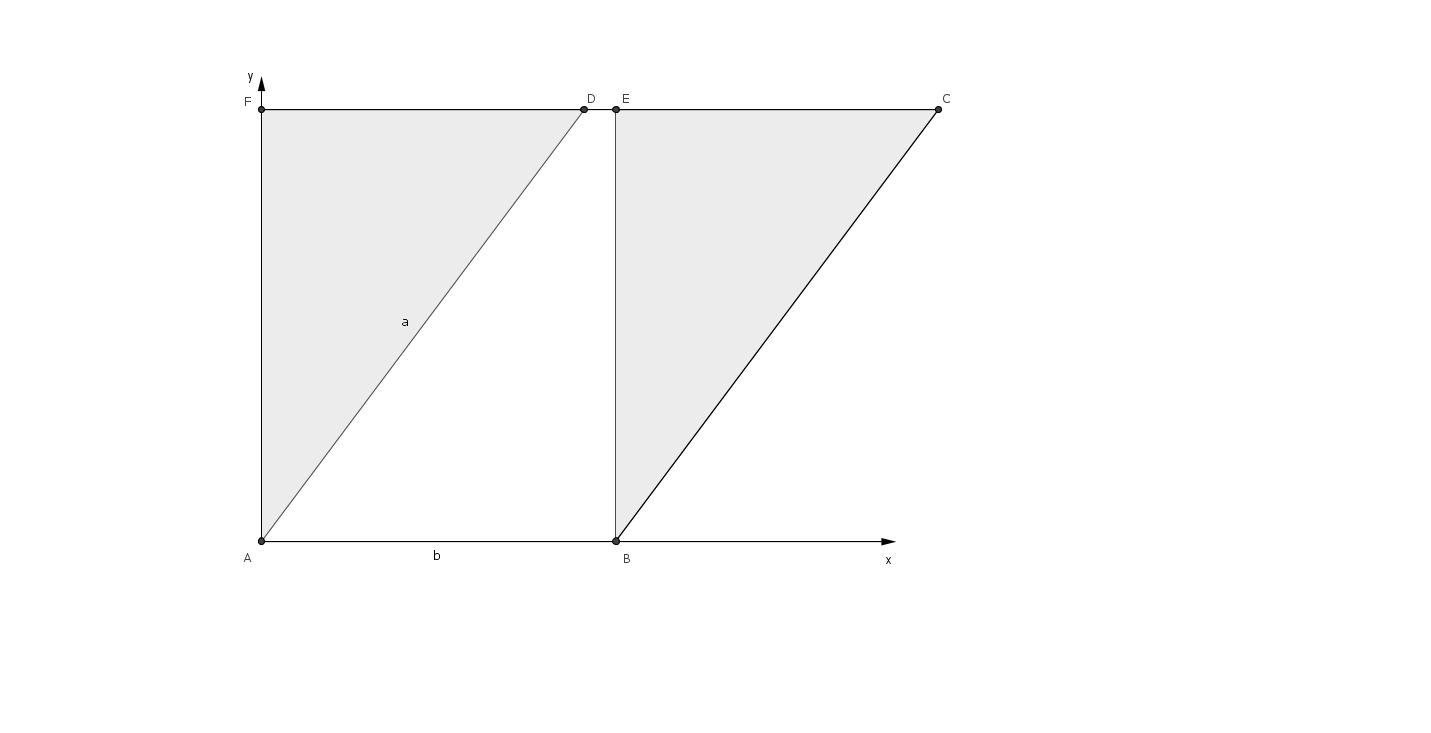}}
\caption{Mutual arrangement of the parallelogram and the rectangle}
\label{fig:coorsys}
\end{figure}

 Neumann eigenvalues and eigenfunctions of rectangle $ABEF$ could be computed explicitly,
$\mu_2 = \dfrac{\pi^2}{a^2}$ and one can take the eigenfunction $u_2 = \cos \dfrac{\pi y}{a} $.
This particular second Neumann eigenfunction doesn't depend on $x$.

The translation by vector $\overrightarrow{AB}$ maps triangle $FAD$ to the triangle $EBC$. So the function $u_2$ resticted to the triangle $FAD$ could be extended to triangle
$EBC$ as $\widetilde{u_2}(x, y) = u_2(x-a, y) $. Define the function
$$
	\widehat{u_2}(x, y) =
		\begin{cases}
			u_2(x, y), & \text{if the point $(x, y)$ belongs to the quadrangle $ABED$;} \\
			\widetilde{u_2}(x,y), & \text{if the point $(x, y)$ belongs to the triangle $BEC$.}
		\end{cases}
$$
The fact that $u_2$ doesn't depend on $x$ provides that $\widehat{u_2} \in C^1$. Rayleigh quotient of $\widehat{u_2}$ is equal exactly to the second Neumann eigenvalue
of the rectangle ABEF:
\begin{eqnarray}
R[\widehat{u_2}] = \dfrac{\displaystyle \iint\limits_{ABCD} | \nabla \widehat{u_2}(x, y) |^2 \diff x \! \diff y}
									{\displaystyle \iint\limits_{ABCD} \widehat{u_2}(x, y)^2 \diff x \! \diff y}
 = \dfrac{\displaystyle \iint\limits_{ABED} | \nabla u_2(x, y) |^2 \diff x \! \diff y + \iint\limits_{BEC} | \nabla \widetilde{u_2}(x, y) |^2 \diff x \! \diff y}
{\displaystyle \iint\limits_{ABED} u_2(x, y)^2 \diff x \! \diff y + \displaystyle \iint\limits_{BEC} \widetilde{u}(x, y)^2  \diff x \! \diff y} =  \nonumber \\
 = \dfrac{\displaystyle \iint\limits_{ABED} | \nabla u_2(x, y) |^2 \diff x \! \diff y + \iint\limits_{ADF} | \nabla u_2(x, y) |^2 \diff x \! \diff y }
{\displaystyle \iint\limits_{ABED} u_2(x, y)^2 \diff x \! \diff y + \displaystyle \iint\limits_{ADF} u_2(x, y)^2 \diff x \! \diff y } = 
\dfrac{\displaystyle \iint\limits_{ABEF} | \nabla u_2(x, y) |^2 \diff x \! \diff y }
					{\displaystyle \iint\limits_{ABEF} u_2(x, y)^2 \diff x \! \diff y } = \mu_2(ABEF). \nonumber
\end{eqnarray}
By the variational principle for the laplacian eigenvalues this means that $\mu_2(ABCD) \leq \mu_2(ABEF)$.
This is the statement of the theorem besides the additional condition on the projection of a longer side.

In fact, this additional condition is not necessary in the general case. It's enough to provide the cutting of the rectangle into finite number of pieces and translate
these pieces by vectors lying on $x$ axis to form the parallelogram. Then the same inequality as above is easy to show.

\begin{figure}[H]
\center{\includegraphics[scale = 0.55]{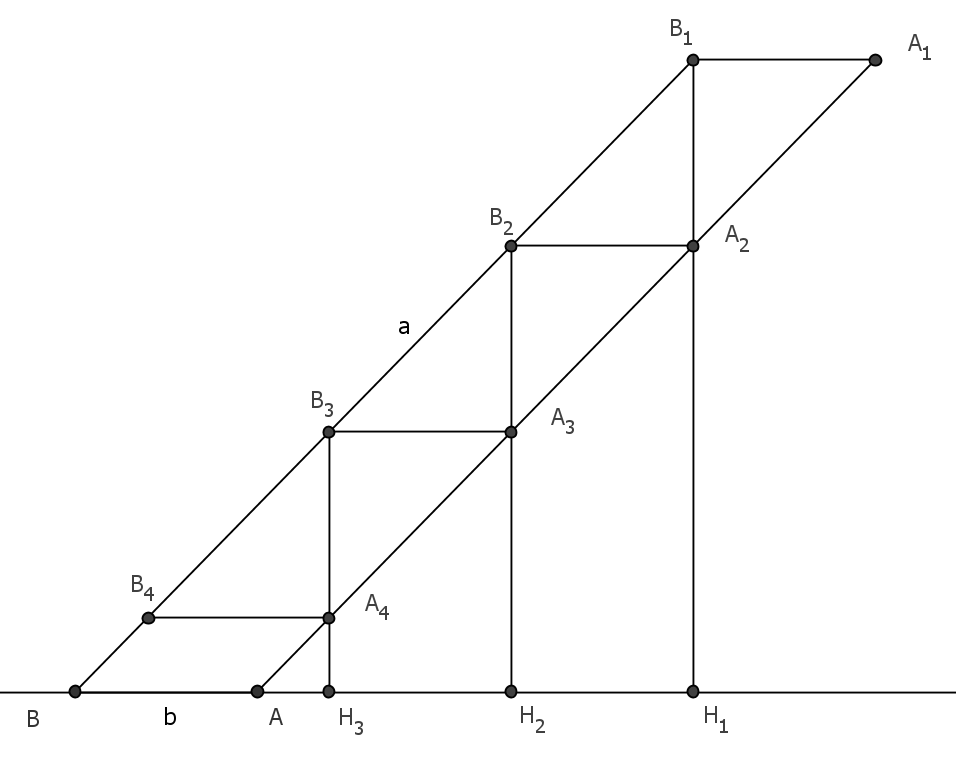}}
\caption{Parallelogram cutting}
\label{fig:manycuts}
\end{figure}

\autoref{fig:manycuts} illustrates an example of the cutting. Let's describe the construction in a general case.
Denote by $A$, $B$ the vertices on the shorter side and by $A_1$, $B_1$ the vertices on the other shorter side such that $A_1A$ and $B_1B$ are the longer sides, 
$B$ and $A_1$ are the vertices with the smallest angles. Let $H_1$ be the foot of the perpendicular from $B_1$ to the line $AB$.
If the altitude $B_1H_1$ falls inside the parallelogram, then it's just the previous case. Otherwise, denote the intersection point with the side $AA_1$ by $A_2$.
Run a parallel line to the shorter side $AB$ through the point $A_2$. Let the intersection of this line with the other longer side be the point $B_2$.
Then drop another perpendicular from $B_2$ to the shorter side $AB$.
If it falls on the side, then it would be a stop. Otherwise, let us call the intersection with the longer side $AA_1$ by $A_3$ and repeat the previous steps.
It's clear that after one iteration the height decreases by $b \sin \alpha$.
That means that after a finite number of steps the height of $AA_iB_iB$ would become less than $b \sin \alpha$.
And the altitude from a point $B_i$ to the line $AB$ lies inside the parallelogram after the finite number of iterations.

\begin{figure}[H]
\center{\includegraphics[scale = 0.7]{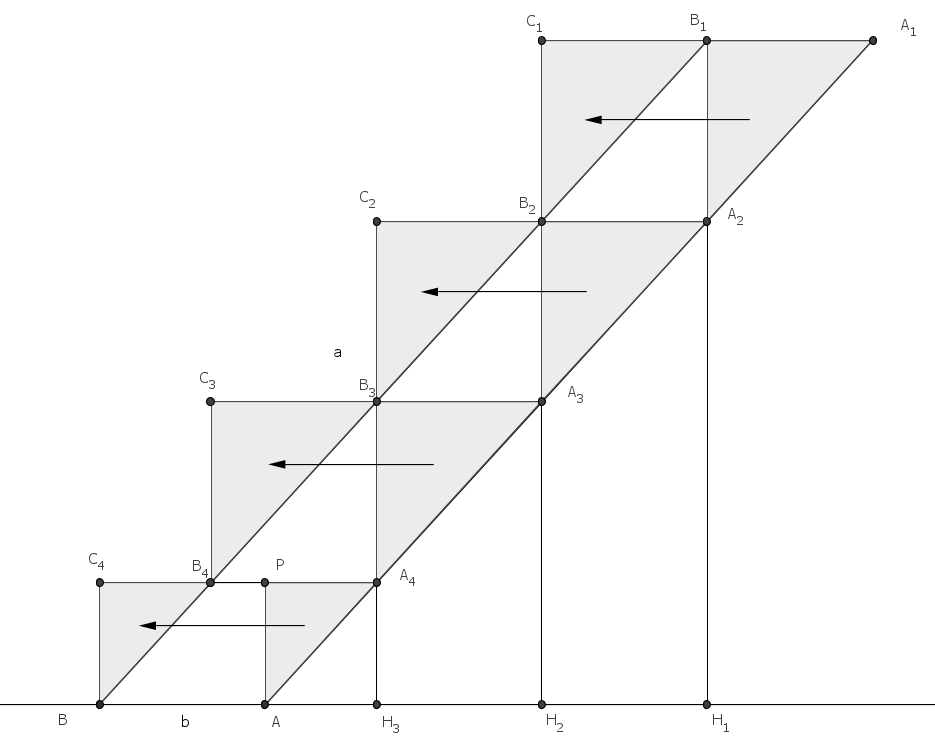}}
\caption{Translation of the pieces}
\label{fig:translate}
\end{figure}

Now the parallelogram is split into a finite number of smaller parallelograms by horizontal parallel lines $A_iB_i$.
All these smaller parallelograms have equal angles and the bases because the sides $A_iB_i$ are all equal and parallel to the side $AB$.
All the heights except the height of the last parallelogram are equal to $b \sin \alpha$. Last parallelogram height is in general less then $b \sin \alpha$
(besides the case when the last altitude base $H_i$ coinsides with the point $A$).
Cut every equal parallelogram part into two triangles by the altitudes $B_iA_{i+1}$. Cut the last parallelogram into
two parts by the perpendicular to the shorter side running through the point~$A$. Translate all the triangles that have a side containing in the $AA_1$
by vector $\overrightarrow{AB}$. This translation is shown in \autoref{fig:translate}.
Each parallelogram $A_{i+1}A_{i}B_{i}B_{i+1}$ is mapped onto rectangle with the same base. Then translate all the rectangles horizontally to form a rectangle with
the same base as the initial parallelogram. It's easy to see that such transformation has an inverse one and it also translates parts by
horizontal vectors. Consider the first Neumann eigenfunction on the rectangle. It does not depend on the horizontal coordinate.
The inverse transformation translate this eigenfunction to a function on the parallelogram. The Rayleigh quotient of the function on the
parallelogram is the same as on the rectanguar because there is no difference either to integrate the function over the domain or to integrate the translated function 
over the translated domain. This is the same argument as above.
So, we get the inequality in this case too.

\end{proof}

\begin{Theorem}{}
Consider a parallelogram $Q$ with the smallest angle $\alpha$ and the lengths of the sides $a \geq b$.
If $\dfrac{b}{a} \leq 2 \sin \alpha - 1$, then $M_2(Q) \leq M_2(S)$, where $S$ is a square.

\end{Theorem}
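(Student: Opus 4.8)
The plan is to deduce Theorem 2 from Theorem 1 together with the explicit Neumann spectrum of a rectangle, reducing the whole statement to a single algebraic inequality. First I would observe that the hypothesis $\frac{b}{a} \le 2\sin\alpha - 1$ is stronger than the hypothesis of Theorem 1: since $\sin\alpha \le 1$ we have $2\sin\alpha - 1 \le \sin\alpha$, so $\frac{b}{a} \le \sin\alpha$ and Theorem 1 applies. (Note that this also forces $\sin\alpha > \tfrac12$, i.e.\ $\alpha > \tfrac{\pi}{6}$, which is exactly the range in which the hypothesis is non-vacuous.) Hence $\mu_2(Q) \le \mu_2(R)$, where $R$ is the rectangle having the same shorter side $b$ and the same area $ab\sin\alpha$ as $Q$.

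Next I would identify $R$ and its first nonzero Neumann eigenvalue explicitly. The area constraint forces $R$ to have side lengths $b$ and $\frac{ab\sin\alpha}{b} = a\sin\alpha$, and $a\sin\alpha \ge b$ (again from $\frac{b}{a} \le \sin\alpha$) shows that $a\sin\alpha$ is the longer side. Since the Neumann eigenvalues of a rectangle with sides $p \le q$ are $\pi^2\bigl(\frac{m^2}{p^2} + \frac{n^2}{q^2}\bigr)$, the first nonzero one is $\mu_2(R) = \frac{\pi^2}{(a\sin\alpha)^2}$, realized by the eigenfunction depending only on the long direction --- exactly the function used in the proof of Theorem 1.

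Finally I would assemble the normalized functional using $p(Q) = 2(a+b)$ and the monotonicity just obtained:
\[
M_2(Q) = \mu_2(Q)\,p(Q)^2 \le \mu_2(R)\,p(Q)^2 = \frac{\pi^2}{(a\sin\alpha)^2}\bigl(2(a+b)\bigr)^2 = \frac{4\pi^2(a+b)^2}{a^2\sin^2\alpha}.
\]
Since $M_2(S) = 16\pi^2$, the desired bound $M_2(Q) \le M_2(S)$ is equivalent to $(a+b)^2 \le 4a^2\sin^2\alpha$, and taking positive square roots this reads $a+b \le 2a\sin\alpha$, i.e.\ precisely $\frac{b}{a} \le 2\sin\alpha - 1$. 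I do not expect a genuine obstacle once Theorem 1 is in hand, as the remainder is a short computation; the single point requiring care is that the eigenvalue gain must be taken through the rectangle $R$ (longer side $a\sin\alpha$, eigenvalue $\pi^2/(a\sin\alpha)^2$), whereas the perimeter factor must remain that of $Q$ itself, namely $2(a+b)$, not that of $R$. The hypothesis of Theorem 2 then emerges as sharp for this method, being exactly the condition that keeps the resulting bound at most $16\pi^2$.
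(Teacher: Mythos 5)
Your proposal is correct and follows essentially the same route as the paper: apply Theorem 1 via $2\sin\alpha - 1 \le \sin\alpha$, compute $\mu_2(R) = \pi^2/(a\sin\alpha)^2$ for the rectangle with sides $b$ and $a\sin\alpha$, and reduce $M_2(Q) \le 16\pi^2$ to the algebraic inequality $1 + \tfrac{b}{a} \le 2\sin\alpha$. Your added remarks (that $a\sin\alpha$ is indeed the longer side, and that the hypothesis is exactly sharp for this method) are accurate refinements of the paper's computation rather than a different argument.
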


\begin{proof}
A direct calculation provides that $M_2(S) = (4a)^2 \dfrac{\pi^2}{a^2} = 16 \pi^2$.
Since $\sin \alpha \leq 1$, one has $2 \sin \alpha - 1 \leq \sin \alpha $. So the assumption of the previous theorem is true. This implies that
$$M_2(Q) = \mu_2(Q) p^2(Q) \leq \mu_2(R) p^2(Q) = 4 \dfrac{\pi^2}{(a \sin \alpha)^2} (a + b)^2 = 4 \left(1 + \dfrac{b}{a} \right)^2
													\left(\dfrac{\pi}{\sin \alpha}\right)^2.$$

Rewrite $\dfrac{b}{a} \leq 2 \sin \alpha - 1$ as $1 + \dfrac{b}{a} \leq 2 \sin \alpha$. Thus,
$$M_2(Q) \leq 4 (2 \sin \alpha)^2 \left(\dfrac{\pi}{\sin \alpha}\right)^2 \leq 16 \pi ^2 = M_2(S).$$

\end{proof}

\begin{remark}
 The inequality $2 \sin \alpha - 1 \geq 0$ holds only for parallelograms with the smallest angle that is not less than $\dfrac{\pi}{6}$.
\end{remark}

The proof of the theorem is inspired by the famous Wallace -- Bolyai -- Gerwien theorem \cite{Wallace}, \cite{Bolyai}, \cite{Gerwien}.
Two polygons or polyhedra are called scissors congruent if and only if one could be cut into finitely many pieces and rearranged
through translations and rotations to form the other one.
The theorem states that if and only if two polygons have the same area, then they are scissors-congruent.
It's well-known that this theorem doesn't hold for three-dimensional polyhedra. The Dehn's invariant is an algebraical characteristic of polyhedron and it is
the same for scissors congruent polyhedra, but there exist polyhedra od the same volume and different Dehn's invariant. Sydler has proved that
if $P$ and $Q$ both have the same volume and the same Dehn invariant, then $P$ and $Q$ are scissors congruent \cite{Sydler}.
It's interesting to investigate the connection between isospectrality and the Dehn's invariant. Weyl's law provides that isospectral polyhedra have the same volume 
\cite{Weyl}. It's natural to state the problem.

\newtheorem{Prb}{Problem}
\begin{Prb}
Let there be two polyhedra with the same Dirichlet (Neumann) spectra. Is it necessary that the Dehn's invariants are equal for these polyhedra?
\end{Prb}

We can state this problem as a question: "Can one hear the Dehn's invariant?".

\end{document}